\newfont{\footsc}{cmcsc10 at 8truept}
\newfont{\footbf}{cmbx10 at 8truept}
\newfont{\footrm}{cmr10 at 10truept}
\newtheorem{theorem}{Theorem}
\newtheorem{corollary}[theorem]{Corollary}
\newtheorem{problem}[theorem]{Problem}
\newtheorem{proposition}[theorem]{Proposition}
\newtheorem{remark}[theorem]{Remark}
\newenvironment{proof}[1][Proof]{\noindent{\textbf {#1}  }}  {\hfill$\Box$\bigskip}
\begin{document}

\title{\textbf{The trace norm of }$r$\textbf{-partite graphs and matrices}}
\author{V. Nikiforov\thanks{Department of Mathematical Sciences, University of
Memphis, Memphis TN 38152, USA; \textit{email: vnikifrv@memphis.edu}}}
\maketitle

\begin{abstract}
The trace norm $\left\Vert G\right\Vert _{\ast}$ of a graph $G$ is the sum of
its singular values, i.e., the absolute values of its eigenvalues. The norm
$\left\Vert G\right\Vert _{\ast}$ has been intensively studied under the name
of \textit{graph energy}, a concept introduced by Gutman in 1978.

This note studies the maximum trace norm of $r$-partite graphs, which raises
some unusual problems for $r>2$. It is shown that, if $G$ is an $r$-partite
graph of order $n,$ then
\[
\left\Vert G\right\Vert _{\ast}<\frac{n^{3/2}}{2}\sqrt{1-1/r}+\left(
1-1/r\right)  n.
\]
For some special $r$ this bound is asymptotically tight: e.g., if $r$ is the
order of a real symmetric conference matrix, then, for infinitely many $n,$
there is a graph $G\ $of order $n$ with%
\[
\left\Vert G\right\Vert _{\ast}>\frac{n^{3/2}}{2}\sqrt{1-1/r}-\left(
1-1/r\right)  n.
\]
\bigskip

\textbf{AMS classification: }\textit{15A42; 05C50.}

\textbf{Keywords:}\textit{ trace norm; graph energy; }$r$\textit{-partite
graph;} \textit{singular values, Hadamard matrix; conference matrix.}

\end{abstract}

\section{Introduction}

The \emph{trace norm} $\left\Vert A\right\Vert _{\ast}$ of a matrix $A$ is the
sum of the singular values of $A;$ it is also known as the \emph{nuclear norm}
or the \emph{Schatten }$1$\emph{-norm of }$A$\emph{.} The trace norm of the
adjacency matrix of graphs has been much studied under the name \emph{graph
energy,} a concept introduced by Gutman in \cite{Gut78}; for an overview of
this vast research, see \cite{GLS12}. Thus, let us write $\left\Vert
G\right\Vert _{\ast}$ for the trace norm of the adjacency matrix of a graph
$G,$ and note that $\left\Vert G\right\Vert _{\ast}$ is just the sum of the
absolute values of the $G$ eigenvalues.

Koolen and Moulton \cite{KoMo01} studied the maximum trace norm of graphs of
order $n$; in particular, they proved that if $G$ is a graph of order $n,$
then
\begin{equation}
\left\Vert G\right\Vert _{\ast}\leq n^{3/2}/2+n/2, \label{KM}%
\end{equation}
with equality if and only if $G$ belongs to a certain family of strongly
regular graphs; in \cite{Hae08} Haemers showed that these graphs arise from a
class of Hadamard matrices. Furthermore, Koolen an Moulton \cite{KoMo03}
proved that if $G$ is a bipartite graph of order $n,$ then
\begin{equation}
\left\Vert G\right\Vert _{\ast}\leq n^{3/2}/\sqrt{8}+n/2, \label{KMb}%
\end{equation}
with equality if and only if $G$ is the incidence graph of a particular type
of design.

Given the cases of equality in bounds (\ref{KM}) and (\ref{KMb}), arguably,
much of their thrill is in the fact that the bulk parameter \textquotedblleft
trace norm\textquotedblright\ is maximized on rare graphs of delicate structure.

To make the next step in this direction, recall that a graph is called
$r$\emph{-partite}\ if its vertices can be partitioned into $r$ edgeless sets.
We shall study the following natural problem arising in the vein of (\ref{KMb}):

\begin{problem}
\label{pr}If $r\geq3,$ what is the maximum trace norm of an $r$-partite graph
of order $n$?
\end{problem}

For complete $r$-partite graphs the question was answered in \cite{SGR15}, but
in general Problem \ref{pr} is much more difficult than the question for
bipartite graphs, for it has many variations, it requires novel constructions,
and most of it is beyond the reach of present methods.

First, we shall restate Problem \ref{pr} in analytic matrix form and shall
give some upper bounds. The matrix setup elucidates the main factors in the
graph problem. Further, using graph-theoretic proofs, we shall fine-tune these
upper bounds at the price of somewhat increased complexity.

We shall show that for infinitely many $r$ our bounds are exact or tight up to
low order terms. The intriguing point here is that the tightness of the bounds
is known only if $r$ is the order of a conference matrix, and since such
matrices do not exist for all $r,$ a lot of open problems arise.

\section{Upper bounds}

Given an $n\times n$ matrix $A=\left[  a_{i,j}\right]  $ and nonempty sets
$I\subset\left[  n\right]  $ and $J\subset\left[  n\right]  ,$ write $A\left[
I,J\right]  $ for the submatrix of all $a_{i,j}$ with $i\in I$ and $j\in J.$
An $n\times n$ matrix $A$ is called $k$\emph{-partite} if there is a partition
of its index set $\left[  n\right]  =N_{1}\cup\cdots\cup N_{k}$ such that
$A\left[  N_{i},N_{i}\right]  =0$ for any $i\in\left[  k\right]  $.

Further, write $A^{\ast}$ for the \emph{Hermitian transpose }of $A$, and let
$\left\Vert A\right\Vert _{\max}=\max_{i,j}\left\vert a_{i,j}\right\vert $. As
usual, $I_{n}$ and $J_{n}$ stand for the identity and the all-ones matrices of
order $n;$ we let $K_{n}=J_{n}-I_{n}$.

\begin{theorem}
\label{th1}Let $n\geq r\geq2,$ and let $A$ be an $n\times n$ complex matrix
with $\left\Vert A\right\Vert _{\max}\leq1.$ If $A$ is $r$-partite, then
\begin{equation}
\left\Vert A\right\Vert _{\ast}\leq n^{3/2}\sqrt{1-1/r}. \label{Mb}%
\end{equation}
Equality holds if and only if all singular values of $A$ are equal to
$\sqrt{\left(  1-1/r\right)  n}$.
\end{theorem}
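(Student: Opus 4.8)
The plan is to bound $\left\Vert A\right\Vert_{\ast}$ via the Cauchy--Schwarz inequality applied to the singular values, reducing the problem to a bound on $\left\Vert A\right\Vert_F^2 = \sum \sigma_i^2 = \operatorname{tr}(A^{\ast}A)$ and on the rank (or a substitute for it). Since $\left\Vert A\right\Vert_{\ast} = \sum_{i=1}^n \sigma_i$, Cauchy--Schwarz gives $\left\Vert A\right\Vert_{\ast} \le \sqrt{n}\,\sqrt{\sum_i \sigma_i^2} = \sqrt{n}\,\left\Vert A\right\Vert_F$. The Frobenius norm is easy to control: because $A$ is $r$-partite with $\left\Vert A\right\Vert_{\max} \le 1$, the zero diagonal blocks $A[N_i,N_i]$ remove at least $\sum_i |N_i|^2$ entries from the count, so $\left\Vert A\right\Vert_F^2 \le n^2 - \sum_{i=1}^r |N_i|^2$. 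By convexity (or Cauchy--Schwarz again) $\sum_i |N_i|^2 \ge n^2/r$, hence $\left\Vert A\right\Vert_F^2 \le n^2(1 - 1/r)$. Combining, $\left\Vert A\right\Vert_{\ast} \le \sqrt{n}\cdot n\sqrt{1-1/r} = n^{3/2}\sqrt{1-1/r}$, which is exactly (\ref{Mb}).

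For the equality characterization I would track when each inequality above is tight. The Cauchy--Schwarz step $\sum_i \sigma_i \le \sqrt{n}\sqrt{\sum_i \sigma_i^2}$ is an equality precisely when all $n$ singular values $\sigma_1,\dots,\sigma_n$ are equal, say to some common value $\sigma$. If this holds, then from the two displayed bounds we get $n\sigma^2 = \left\Vert A\right\Vert_F^2 \le n^2(1-1/r)$, so $\sigma \le \sqrt{(1-1/r)n}$, and equality in (\ref{Mb}) forces $\sigma = \sqrt{(1-1/r)n}$. Conversely, if all singular values equal $\sqrt{(1-1/r)n}$, then $\left\Vert A\right\Vert_{\ast} = n\sqrt{(1-1/r)n} = n^{3/2}\sqrt{1-1/r}$, so equality holds. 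This gives the stated ``if and only if'' cleanly, and it is worth remarking that it does not by itself assert such matrices exist — that is the content of the later construction using conference matrices.

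I expect the only real subtlety is bookkeeping in the Frobenius estimate: one must be careful that $A[N_i,N_i]=0$ accounts for exactly $\sum_i|N_i|^2$ zero entries (including the diagonal, which is automatically zero here) and that all remaining $n^2 - \sum_i|N_i|^2$ entries satisfy $|a_{i,j}|^2 \le 1$. There is no genuine obstacle — the argument is a two-line Cauchy--Schwarz sandwich — but I would state the partition-size bound $\sum_i|N_i|^2 \ge n^2/r$ explicitly (it follows from $\left(\sum_i|N_i|\right)^2 \le r\sum_i|N_i|^2$ with $\sum_i|N_i|=n$) so that the equality discussion is self-contained. Note also that the hypothesis $n\ge r$ is only needed so that a partition into $r$ nonempty parts exists; the inequality itself holds for any $r$-partition.
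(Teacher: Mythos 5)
Your argument is correct and is essentially identical to the paper's proof: the same Cauchy--Schwarz step on the singular values, the same Frobenius-norm estimate $\operatorname{tr}(AA^{\ast})\leq n^{2}-\sum_{i}\left\vert N_{i}\right\vert ^{2}\leq\left(1-1/r\right)n^{2}$ via the zero diagonal blocks, and the same equality analysis forcing all singular values to equal $\sqrt{\left(1-1/r\right)n}$. Your write-up is in fact slightly cleaner, since the paper's displayed chain drops a factor of $n$ in passing from $n\operatorname{tr}(AA^{\ast})$ to $\sum_{i,j}\left\vert a_{i,j}\right\vert^{2}$ (an evident typo), whereas you keep the bookkeeping straight.
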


\begin{proof}
Let $A=\left[  a_{i,j}\right]  $, and let $\sigma_{1},\ldots,\sigma_{n}$ be
the singular values of $A$. Clearly,
\begin{align*}
\left\Vert A\right\Vert _{\ast}^{2}  &  =\left(  \sigma_{1}+\cdots+\sigma
_{n}\right)  ^{2}\leq n\left(  \sigma_{1}^{2}+\cdots+\sigma_{n}^{2}\right)
=n(\mathrm{tr}(AA^{\ast}))\\
&  =\sum_{i,j\in\left[  n\right]  }\left\vert a_{i,j}\right\vert ^{2}\leq
n^{2}-\sum_{i\in\left[  r\right]  }\left\vert N_{i}\right\vert ^{2}\leq
n^{2}-\frac{1}{r}n^{2},
\end{align*}
completing the proof of (\ref{Mb}). If equality holds in (\ref{Mb}), then
\[
\left(  \sigma_{1}+\cdots+\sigma_{n}\right)  ^{2}=n\left(  \sigma_{1}%
^{2}+\cdots+\sigma_{n}^{2}\right)  =\left(  1-1/r\right)  n^{2},
\]
and so $\sigma_{1}=\cdots=\sigma_{n}=$ $\sqrt{\left(  1-1/r\right)  n}$,
completing the proof of Theorem \ref{th1}.
\end{proof}

\begin{remark}
A matrix $A=\left[  a_{i,j}\right]  $ that makes (\ref{Mb}) an equality has a
long list of further properties, e.g.: $r$ divides $n;$ the partition sets are
of size $n/r$; if an entry $a_{i,j}$ is not in a diagonal block, then
$\left\vert a_{i,j}\right\vert =1;$ and most importantly, $AA^{\ast}=\left(
1-1/r\right)  nI_{n}.$ Thus, the rows of $A$ are orthogonal, and so are its
columns. It seems hard to find for which $r$ and $n$ such matrices
exist.\medskip
\end{remark}

Next, from Theorem \ref{th1} we deduce a similar bound for nonnegative
matrices, in particular, for graphs.

\begin{theorem}
\label{th2}Let $n\geq r\geq2$, and let $A$ be an $n\times n$ nonnegative
matrix with $\left\Vert A\right\Vert _{\max}\leq1.$ If $A$ is $r$-partite,
then
\[
\left\Vert A\right\Vert _{\ast}\leq\frac{n^{3/2}}{2}\sqrt{1-1/r}+\left(
1-1/r\right)  n.
\]

\end{theorem}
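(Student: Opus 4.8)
The plan is to reduce the nonnegative case to the complex (or signed) case covered by Theorem~\ref{th1}, by symmetrizing $A$ about its average. Write $A = \frac12 J + B$ is too crude because $A$ has zero diagonal blocks and need not be symmetric; instead, the right object is a matrix $C$ obtained from $A$ by replacing each off-diagonal-block entry $a_{i,j}\in[0,1]$ with $2a_{i,j}-1\in[-1,1]$, while keeping the diagonal blocks zero. Concretely, let $\widetilde J$ be the matrix that is $1$ on all off-diagonal blocks and $0$ on the diagonal blocks (so $\widetilde J = J_n - \bigoplus_i J_{|N_i|}$ after a suitable reindexing), and set $C = 2A - \widetilde J$. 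Then $C$ is $r$-partite, real, and satisfies $\left\Vert C\right\Vert_{\max}\le 1$, so Theorem~\ref{th1} gives $\left\Vert C\right\Vert_{\ast}\le n^{3/2}\sqrt{1-1/r}$.

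Next I would use the triangle inequality for the trace norm: from $2A = C + \widetilde J$ we get $2\left\Vert A\right\Vert_{\ast}\le \left\Vert C\right\Vert_{\ast}+\left\Vert \widetilde J\right\Vert_{\ast}$, hence
\[
\left\Vert A\right\Vert_{\ast}\le \frac{n^{3/2}}{2}\sqrt{1-1/r}+\frac12\left\Vert \widetilde J\right\Vert_{\ast}.
\]
So the whole theorem comes down to bounding $\left\Vert \widetilde J\right\Vert_{\ast}$, the trace norm of the adjacency-type matrix of the complete $r$-partite ``pattern.'' The eigenvalues of $\widetilde J$ are computable: if the parts have sizes $n_1,\dots,n_r$ summing to $n$, then $\widetilde J$ has rank at most $r$, its nonzero eigenvalues are those of the $r\times r$ matrix with $(i,j)$ entry $0$ if $i=j$ and $\sqrt{n_i n_j}$ otherwise (equivalently $\sqrt{n_i}\sqrt{n_j}$ off-diagonal), and one checks that the sum of absolute values of these eigenvalues is largest when the parts are equal, giving $\left\Vert \widetilde J\right\Vert_{\ast}=\left\Vert J_{n/r}\otimes K_r\right\Vert_{\ast}$, whose eigenvalues are $(1-1/r)n$ once and $-n/r$ with multiplicity $r-1$. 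Hence $\left\Vert \widetilde J\right\Vert_{\ast}=(1-1/r)n+(r-1)(n/r)=2(1-1/r)n$, and plugging in finishes the proof.

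The one genuine obstacle is the claim that $\left\Vert \widetilde J\right\Vert_{\ast}$ is maximized, over all compositions $n_1+\cdots+n_r=n$, at the balanced composition $n_i=n/r$ (and the companion point that we should take the worst case since we do not know $r\mid n$). For this I would argue directly with the $r\times r$ reduced matrix $M$ with entries $M_{ij}=\sqrt{n_in_j}\,[i\ne j]$: its trace is $0$, so $\sum_k \mu_k=0$ and therefore $\left\Vert \widetilde J\right\Vert_{\ast}=\sum_k|\mu_k| = 2\sum_{\mu_k>0}\mu_k \le 2\mu_{\max}$ — but $\mu_{\max}$ is just the spectral radius of $M$, which by Perron--Frobenius satisfies $\mu_{\max}\le \max_i\sum_{j\ne i}\sqrt{n_in_j}$, and a short estimate of $\sum_{j\ne i}\sqrt{n_j}\le \sqrt{(r-1)(n-n_i)}$ combined with maximizing over $n_i$ bounds this by $(1-1/r)n$; since $\left\Vert \widetilde J\right\Vert_{\ast}=2\mu_{\max}$ exactly when $\mu_{\max}=-\sum_{\mu_k<0}\mu_k$ (which holds here as $\widetilde J$ is a nonnegative symmetric matrix with zero trace — in fact one can invoke that for such a matrix the trace norm equals twice the largest eigenvalue only in special cases, so it is cleaner to just bound $\left\Vert\widetilde J\right\Vert_\ast\le 2\mu_{\max}\le 2(1-1/r)n$ and be done). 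This inequality $\left\Vert\widetilde J\right\Vert_\ast \le 2(1-1/r)n$ is all that is needed, and it is clean enough to carry out in a few lines.
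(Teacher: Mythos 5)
Your overall route is the same as the paper's: write $2A=C+\widetilde J$ where $\widetilde J$ is the complete $r$-partite pattern (the paper calls it $K$), apply Theorem~\ref{th1} to $C=2A-\widetilde J$, use the triangle inequality, and reduce everything to the single estimate $\Vert\widetilde J\Vert_{\ast}\le 2\left(1-1/r\right)n$. That reduction is correct and is exactly what the paper does. The problem is your proof of the remaining estimate.

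There are two gaps there, one minor and one fatal. The minor one: the inequality $\sum_{\mu_k>0}\mu_k\le\mu_{\max}$ does \emph{not} follow from $\widetilde J$ being nonnegative symmetric with zero trace (a disjoint union of two edges has eigenvalues $1,1,-1,-1$, so the sum of positive eigenvalues is twice the largest). It holds here only because a complete multipartite graph has exactly one positive eigenvalue --- a specific structural fact you must invoke, and which the paper does invoke. The fatal one: your bound $\mu_{\max}\le\max_i\sum_{j\ne i}\sqrt{n_in_j}$ followed by Cauchy--Schwarz and maximizing over $n_i$ yields $\sqrt{r-1}\,\bigl(n/2\bigr)$, and $\sqrt{r-1}\,n/2\le\left(1-1/r\right)n$ is equivalent to $(r-2)^2\le0$, i.e., it holds only for $r=2$. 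This is not just slack in your estimate: the max row sum itself can exceed the target. For $r=3$ with parts $n/2,n/4,n/4$, the first row sum of your reduced matrix is $2\sqrt{(n/2)(n/4)}=n/\sqrt{2}\approx0.707n>\tfrac{2}{3}n$. So Perron--Frobenius via row sums cannot deliver $\mu_{\max}\le\left(1-1/r\right)n$ for unbalanced parts. The clean repair is the paper's: $\lambda_1(\widetilde J)\le\left(1-1/\chi\right)n\le\left(1-1/r\right)n$ by Cvetkovi\'c's eigenvalue--chromatic-number bound (or, equivalently, a Motzkin--Straus-type argument showing the balanced partition maximizes $\lambda_1$ among complete $r$-partite graphs of order $n$), combined with $\Vert\widetilde J\Vert_{\ast}=2\lambda_1(\widetilde J)$ from the one-positive-eigenvalue property.
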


\begin{proof}
For each $i\in\left[  r\right]  ,$ set $n_{i}=\left\vert N_{i}\right\vert ,$
and write $K$ for the matrix obtained from $J_{n}$ by zeroing $J\left[
N_{i},N_{i}\right]  $ for all $i\in\left[  r\right]  .$ Note that $K$ is the
adjacency matrix of the complete $r$-partite graph with vertex classes
$N_{1},\ldots,N_{r}.$ Since $K$ has no positive eigenvalue other than the
largest one $\lambda_{1}\left(  K\right)  $, we see that $\left\Vert
K\right\Vert _{\ast}=2\lambda_{1}\left(  K\right)  $. A result of
Cvetkovi\'{c} \cite{Cve72} implies that $\lambda_{1}\left(  K\right)
\leq\left(  1-1/r\right)  n,$ and so $\left\Vert K\right\Vert _{\ast}%
\leq2\left(  1-1/r\right)  n$.

Now, let $B:=2A-K,$ and note that the matrix $B$ and the sets $N_{1}%
,\ldots,N_{r}$ satisfy the premises of Theorem \ref{th1}; hence, using the
triangle inequality, we find that%
\[
n^{3/2}\sqrt{1-1/r}\geq\left\Vert B\right\Vert _{\ast}\geq\left\Vert
2A-K\right\Vert _{\ast}\geq2\left\Vert A\right\Vert _{\ast}-\left\Vert
K\right\Vert _{\ast}\geq2\left\Vert A\right\Vert _{\ast}-2\left(
1-1/r\right)  n,
\]
completing the proof of Theorem \ref{th2}.
\end{proof}

Note that the matrix $A$ in Theorems \ref{th1} and \ref{th2} needs not be
symmetric; nonetheless, the following immediate corollary gives precisely
Koolen and Moulton's bound (\ref{KMb}) if $r=2.$

\begin{corollary}
\label{cor1}Let $n\geq r\geq2.$ If $G\ $is an $r$-partite graph of order $n,$
then
\begin{equation}
\left\Vert G\right\Vert _{\ast}\leq\frac{n^{3/2}}{2}\sqrt{1-1/r}+\left(
1-1/r\right)  n. \label{eab}%
\end{equation}

\end{corollary}

\subsection{Upper bounds for graphs}

For $r\geq3$ bound (\ref{eab}) can be somewhat improved by more involved
methods. To this end, first we shall give an upper bound on the trace norm of
an $r$-partite graph with $n$ vertices and $m$ edges. Hereafter, $\lambda
_{i}\left(  G\right)  $ stands for the $i$'th largest eigenvalue of the
adjacency matrix of a graph $G.$

\begin{theorem}
\label{thum}Let $n>r>2$ and $2m\geq r^{2}n.$ If $G\ $is an $r$-partite graph
with $n$ vertices and $m$ edges, then
\begin{equation}
\left\Vert G\right\Vert _{\ast}\leq\frac{4m}{n}+\sqrt{\left(  n-r\right)
\left(  2m-\frac{r}{r-1}\left(  \frac{2m}{n}\right)  ^{2}\right)  }.
\label{bom}%
\end{equation}
Equality holds if and only if the following three conditions are met:

(i) $G$ is a regular graph;

(ii) the $r-1$ smallest eigenvalues of $G$ satisfy
\[
\lambda_{n}\left(  G\right)  =\cdots=\lambda_{n-r+2}\left(  G\right)
=-\frac{2m}{\left(  r-1\right)  n};
\]

(iii) the eigenvalues $\lambda_{2}\left(  G\right)  ,\ldots,\lambda
_{n-r+1}\left(  G\right)  $ satisfy
\[
\lambda_{2}^{2}\left(  G\right)  =\cdots=\lambda_{n-r+1}^{2}\left(  G\right)
=\frac{1}{n-r}\left(  2m-\frac{r}{r-1}\left(  \frac{2m}{n}\right)
^{2}\right)  .
\]

\end{theorem}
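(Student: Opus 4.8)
The plan is to bound $\left\Vert G\right\Vert_{\ast}=\sum_{i}|\lambda_i(G)|$ by separating the spectrum of the adjacency matrix into three parts: the Perron eigenvalue $\lambda_1$, the $r-1$ most negative eigenvalues $\lambda_n,\dots,\lambda_{n-r+2}$, and the remaining bulk $\lambda_2,\dots,\lambda_{n-r+1}$. The motivation for this split is the standard fact that an $r$-partite (or even $r$-chromatic) graph has at least $r-1$ eigenvalues that are "negative-like", so these deserve separate treatment from the middle eigenvalues. Writing $\lambda_1$ for the largest eigenvalue, I will use $\lambda_1\le 2m/n$ is false in general (that's a lower bound), so instead I expect to use $\lambda_1\le$ something comparable; more precisely the term $4m/n$ in \eqref{bom} suggests the bound $|\lambda_1|+|\lambda_n|+\cdots+|\lambda_{n-r+2}|$ is being controlled by roughly $2\cdot(2m/n)$, i.e. the Perron value plus the sum of the $r-1$ bottom eigenvalues is at most $4m/n$, with the bottom ones summing (in absolute value) to at most $2m/n$ by an interlacing/trace argument against the complete $r$-partite quotient. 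So the first block of the proof establishes
\[
|\lambda_1(G)|+\sum_{i=0}^{r-2}|\lambda_{n-i}(G)|\le \frac{4m}{n},
\]
with equality forcing regularity and the equalization of the bottom $r-1$ eigenvalues at $-\tfrac{2m}{(r-1)n}$ — conditions (i) and (ii).

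Next I would bound the bulk. Let $s=n-r$ be the number of middle eigenvalues $\lambda_2,\dots,\lambda_{n-r+1}$. By Cauchy–Schwarz,
\[
\sum_{i=2}^{n-r+1}|\lambda_i(G)|\le\sqrt{(n-r)\sum_{i=2}^{n-r+1}\lambda_i^2(G)}.
\]
Now $\sum_{i=1}^n\lambda_i^2(G)=\mathrm{tr}(A^2)=2m$, so $\sum_{i=2}^{n-r+1}\lambda_i^2 = 2m-\lambda_1^2-\sum_{i=0}^{r-2}\lambda_{n-i}^2$. Using $\lambda_1\ge 2m/n$ (average degree bound, valid for any graph) and the fact that among the bottom $r-1$ eigenvalues, subject to their sum being fixed, the sum of squares is minimized when they are all equal, I would lower-bound $\lambda_1^2+\sum\lambda_{n-i}^2$ by $\left(\frac{2m}{n}\right)^2+(r-1)\left(\frac{2m}{(r-1)n}\right)^2=\frac{r}{r-1}\left(\frac{2m}{n}\right)^2$. (This is where the hypothesis $2m\ge r^2 n$ should enter, to guarantee $\lambda_1\ge 2m/n$ is the dominant contribution and that the algebra of the constrained optimization stays in the admissible range — e.g. that $2m/n$ really is an upper bound for each of the magnitudes being compared against, and that the quantity under the square root is nonnegative.) This yields
\[
\sum_{i=2}^{n-r+1}|\lambda_i(G)|\le\sqrt{(n-r)\left(2m-\frac{r}{r-1}\left(\frac{2m}{n}\right)^2\right)},
\]
and combining with the first block gives \eqref{bom}. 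Equality in Cauchy–Schwarz forces $\lambda_2^2=\cdots=\lambda_{n-r+1}^2$, which is condition (iii).

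The main obstacle I anticipate is the equality analysis and the precise role of the hypothesis $2m\ge r^2 n$. Getting the inequality is a routine blend of Cauchy–Schwarz, the trace identity, and interlacing against the complete $r$-partite quotient matrix; but verifying that \emph{all three} equality conditions are simultaneously necessary and sufficient requires care. In particular I need: (a) that equality in the "bottom $r-1$ eigenvalues" step forces them all equal to $-\tfrac{2m}{(r-1)n}$ \emph{and} forces $\lambda_1=2m/n$, which in turn (via the equality case of the average-degree bound) forces $G$ regular; (b) that regularity plus the eigenvalue conditions is actually achievable, i.e. consistent with $G$ being $r$-partite — this is really a feasibility check, not needed for the theorem as stated but lurking behind it; and (c) tracking that the numerical hypothesis $2m\ge r^2 n$ is exactly what keeps the constrained minimization of $\sum\lambda_{n-i}^2$ from being attained at a boundary configuration (where some bottom eigenvalue would have to exceed $2m/n$ in magnitude, contradicting $\lambda_1\ge 2m/n$ together with $\sum\lambda_i^2=2m$). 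I would handle this by first proving a clean lemma: for an $r$-chromatic graph, $\lambda_1+\lambda_n+\cdots+\lambda_{n-r+2}\le 2\lambda_1\le 4m/n$ with the stated equality conditions, isolating the only genuinely graph-theoretic input, and then the rest is bookkeeping.
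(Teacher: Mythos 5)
Your treatment of the bulk term is sound and matches the paper: Cauchy--Schwarz applied to $\lambda_2,\dots,\lambda_{n-r+1}$, the trace identity $\sum_i\lambda_i^2=2m$, and the lower bound $\lambda_1^2+\sum_{i=n-r+2}^{n}\lambda_i^2\ge\frac{r}{r-1}\left(\frac{2m}{n}\right)^2$ (via $\lambda_1\ge 2m/n$, Hoffman's bound, and the power-mean inequality) do give the square-root term of (\ref{bom}), and equality in Cauchy--Schwarz gives (iii). The fatal problem is your first block. Your proposed lemma $\lambda_1+|\lambda_n|+\cdots+|\lambda_{n-r+2}|\le 2\lambda_1\le 4m/n$ has both inequalities pointing the wrong way: Hoffman's bound for an $r$-chromatic graph says $\lambda_1\le|\lambda_{n-r+2}|+\cdots+|\lambda_n|$ (the \emph{signed} sum $\lambda_1+\lambda_{n-r+2}+\cdots+\lambda_n$ is $\le 0$, but the trace norm requires the sum of \emph{absolute values}, which is therefore at least $2\lambda_1$), and $\lambda_1\ge 2m/n$ holds for every graph. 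Consequently
\[
\lambda_1+\sum_{i=n-r+2}^{n}\left\vert\lambda_i\right\vert\;\ge\;2\lambda_1\;\ge\;\frac{4m}{n}
\]
always, with equality only in the extremal situation; the upper bound $4m/n$ you claim for this quantity is the exact reverse of what is true, so the linear and square-root terms of (\ref{bom}) cannot be bounded independently as you propose.

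What actually makes the theorem work --- and this is the content of the paper's proof --- is a trade-off between the two terms. Setting $x=\lambda_1$ and $y=|\lambda_{n-r+2}|+\cdots+|\lambda_n|$, any excess of $x+y$ over $4m/n$ is paid for inside the radical, since $2m-x^2-\frac{1}{r-1}y^2$ shrinks as $x$ and $y$ grow. One must therefore maximize
\[
f(x,y)=x+y+\sqrt{(n-r)\left(2m-x^{2}-\tfrac{1}{r-1}y^{2}\right)}
\]
jointly over the region $y\ge x\ge 2m/n$ and show the maximum is attained at the corner $x=y=2m/n$. The hypothesis $2m\ge r^{2}n$ is used precisely there, to prove that $f$ is decreasing in $y$ when $y>x$ and that $g(x)=f(x,x)$ is decreasing for $x>2m/n$ --- not, as you guessed, to control a boundary configuration in the power-mean step. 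Without this joint optimization the term $4m/n$ in (\ref{bom}) is unobtainable, and the equality analysis for (i) and (ii), which in the correct argument comes from identifying $x=y=2m/n$ as the unique maximizer, collapses as well.
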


\begin{proof}
Let the graph $G$ satisfy the premises of the theorem, and for short, write
$\lambda_{i}$ for $\lambda_{i}\left(  G\right)  .$ Using the fact that
$\lambda_{1}^{2}+\cdots+\lambda_{n}^{2}=2m$ and the AM-QM inequality, we see
that
\begin{align*}
\left\Vert G\right\Vert _{\ast}  &  =\lambda_{1}+\sum_{i=2}^{n-r+1}\left\vert
\lambda_{i}\right\vert +\sum_{i=n-r+2}^{n}\left\vert \lambda_{i}\right\vert
\leq\lambda_{1}+\sum_{i=n-r+2}^{n}\left\vert \lambda_{i}\right\vert
+\sqrt{\left(  n-r\right)  \sum_{i=2}^{n-r+1}\lambda_{i}^{2}}\\
&  =\lambda_{1}+\sum_{i=n-r+2}^{n}\left\vert \lambda_{i}\right\vert
+\sqrt{\left(  n-r\right)  \left(  2m-\lambda_{1}^{2}-\sum_{i=n-r+2}%
^{n}\left\vert \lambda_{i}\right\vert ^{2}\right)  }\\
&  \leq\lambda_{1}+\sum_{i=n-r+2}^{n}\left\vert \lambda_{i}\right\vert
+\sqrt{\left(  n-r\right)  \left(  2m-\lambda_{1}^{2}-\frac{1}{r-1}\left(
\sum_{i=n-r+2}^{n}\left\vert \lambda_{i}\right\vert \right)  ^{2}\right)  }.
\end{align*}
Since $G$ is $r$-partite, Hoffman's bound \cite{Hof70} implies that
\[
\lambda_{1}\leq\left\vert \lambda_{n-r+2}\right\vert +\cdots+\left\vert
\lambda_{n}\right\vert .
\]
Now, letting $x=\lambda_{1},$ $y=\left\vert \lambda_{n-r+2}\right\vert
+\cdots+\left\vert \lambda_{n}\right\vert ,$ and $2m=A,$ we maximize the
function
\[
f\left(  x,y\right)  :=x+y+\sqrt{\left(  n-r\right)  \left(  A-x^{2}-\frac
{1}{r-1}y^{2}\right)  },
\]
subject to the constraints%
\begin{equation}
n\geq r\geq3,\text{ \ \ }A\geq r^{2}n,\text{ \ \ }y\geq x\geq A/n,\text{
\ \ }x^{2}+\frac{1}{r-1}y^{2}\leq A. \label{cons}%
\end{equation}
We shall show that $f\left(  x,y\right)  <f\left(  A/n,A/n\right)  $, unless
$y=x=A/n$. To this end, first we show that $f\left(  x,y\right)  $ is
decreasing in $y$ if $y>x.$ Assume the opposite, that is to say, there are $x$
and $y$, satisfying (\ref{cons}), with $x<y$ and .%
\[
\frac{\partial f\left(  x,y\right)  }{\partial y}=1-\frac{\left(  n-r\right)
y/\left(  r-1\right)  }{\sqrt{\left(  n-r\right)  \left(  A-x^{2}-\frac
{1}{r-1}y^{2}\right)  }}\geq0.
\]
After some algebra we obtain%
\begin{align*}
A  &  \geq\frac{1}{r-1}y^{2}+\left(  n-r\right)  \frac{y^{2}}{\left(
r-1\right)  ^{2}}+x^{2}>\left(  \frac{n-1}{\left(  r-1\right)  ^{2}}+1\right)
x^{2}>\left(  \frac{n-1}{\left(  r-1\right)  ^{2}}+1\right)  \frac{A^{2}%
}{n^{2}}\\
&  \geq\left(  \frac{n-1}{\left(  r-1\right)  ^{2}}+1\right)  r^{2}\frac{A}%
{n}>A,
\end{align*}
a contradiction, proving that $f\left(  x,y\right)  <$ $f\left(  x,x\right)
$, unless $y=x.$

Next, we maximize the function
\[
g\left(  x\right)  :=2x+\sqrt{\left(  n-r\right)  \left(  A-\frac{r}{r-1}%
x^{2}\right)  }\text{,}%
\]
subject to the constraints%
\begin{equation}
n>r\geq3,\text{ \ \ }A\geq r^{2}n,\text{ \ \ }x\geq A/n,\text{ \ \ }\frac
{r}{r-1}y^{2}\leq A. \label{cons1}%
\end{equation}
We shall show that $g\left(  x\right)  <g\left(  A/n\right)  $, unless
$x=A/n.$ To this end, we shall prove that $g\left(  x\right)  $ is decreasing
in $x,$ whenever $x>A/n.$ Assume the opposite, that is to say, there is an
$x,$ satisfying (\ref{cons1}), with $x<A/n$ and%
\[
\frac{dg\left(  x\right)  }{dx}=2-\frac{\left(  n-r\right)  rx/\left(
r-1\right)  }{\sqrt{\left(  n-r\right)  \left(  A-\frac{r}{r-1}x^{2}\right)
}}\geq0.
\]
After some algebra we get
\begin{align*}
4A  &  \geq\left(  \frac{\left(  n-r\right)  r^{2}}{\left(  r-1\right)  ^{2}%
}+\frac{r}{r-1}\right)  x^{2}>\left(  \frac{\left(  n-r\right)  r^{2}}{\left(
r-1\right)  ^{2}}+\frac{r}{r-1}\right)  \frac{A^{2}}{n^{2}}\geq\left(
\frac{\left(  n-r\right)  r^{2}}{\left(  r-1\right)  ^{2}}+\frac{r}%
{r-1}\right)  r^{2}\frac{A}{n}\\
&  =\left(  r-\frac{r^{2}-r+1}{n}\right)  \frac{r^{3}}{\left(  r-1\right)
^{2}}A>\left(  r-\frac{r^{2}-r+1}{r}\right)  \frac{r^{3}}{\left(  r-1\right)
^{2}}A=\frac{r^{2}}{r-1}A>4A.
\end{align*}
This contradiction implies that $g\left(  x\right)  <g\left(  A/n\right)  $,
unless $x=A/n.$ Therefore, $f\left(  x,y\right)  <f\left(  A/n,A/n\right)  $
unless $y=x=A/n$. This inequality implies (\ref{bom}). It also implies that if
equality holds in (\ref{bom}) then $\lambda_{1}=2m/n,$ and so clause
\emph{(i)} follows. Further, equality in (\ref{bom}) implies that%
\[
\lambda_{n-r+2}+\cdots+\lambda_{n}=-\frac{2m}{n}\text{ \ \ and \ \ }%
\sum_{i=n-r+2}^{n}\lambda_{i}^{2}=\frac{1}{r-1}\left(  \sum_{i=n-r+2}%
^{n}\left\vert \lambda_{i}\right\vert \right)  ^{2}%
\]
and so clause \emph{(ii) }follows as well. Finally, equality in (\ref{bom})
implies clause \emph{(iii)} in view of
\[
\sum_{i=2}^{n-r+1}\lambda_{i}^{2}=\frac{1}{n-r}\left(  \sum_{i=2}%
^{n-r+1}\left\vert \lambda_{i}\right\vert \right)  ^{2}\text{ \ \ and
\ \ }\sum_{i=2}^{n-r+1}\lambda_{i}^{2}=2m-\lambda_{1}^{2}-\sum_{i=n-r+2}%
^{n}\lambda_{i}^{2},
\]
completing the proof of Theorem \ref{thum}.
\end{proof}

Next, we maximize bound (\ref{bom}) over $m$ and get a bound that depends only
on $r$ and $n.$

\begin{theorem}
\label{thun}Let $r\geq2$ and $n\geq4\left(  r-1\right)  ^{2}.$ If $G\ $is an
$r$-partite graph of order $n,$ then%
\begin{equation}
\left\Vert G\right\Vert _{\ast}\leq\frac{n\left(  n-r\right)  }{2\sqrt{\left(
n-r\right)  \frac{r}{r-1}+4}}+\frac{\left(  r-1\right)  n}{r}+\frac{2\left(
r-1\right)  n}{r\sqrt{\left(  n-r\right)  \frac{r}{r-1}+4}}. \label{bon}%
\end{equation}
Equality holds if and only if the following three conditions are met:

(i) $G$ is a regular graph of degree
\[
\left(  1+\frac{2}{\sqrt{\left(  n-r\right)  \frac{r}{r-1}+4}}\right)
\frac{\left(  r-1\right)  n}{2r};
\]

(ii) the $r-1$ smallest eigenvalues of $G$ satisfy
\[
\lambda_{n}\left(  G\right)  =\cdots=\lambda_{n-r+2}\left(  G\right)
=-\left(  1+\frac{2}{\sqrt{\left(  n-r\right)  \frac{r}{r-1}+4}}\right)
\frac{n}{2r}.
\]

(iii) the eigenvalues $\lambda_{2}\left(  G\right)  ,\ldots,\lambda
_{n-r+1}\left(  G\right)  $ satisfy%
\[
\left\vert \lambda_{2}\left(  G\right)  \right\vert =\cdots=\left\vert
\lambda_{n-r+1}\left(  G\right)  \right\vert =\frac{n}{2\sqrt{\left(
n-r\right)  \frac{r}{r-1}+4}}.
\]

\end{theorem}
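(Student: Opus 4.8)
The plan is to feed Theorem~\ref{thum} into a single maximization over the edge count, handling sparse graphs by a direct estimate.

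\emph{Normalization; the case $r=2$.} Put $q:=\sqrt{(n-r)\frac{r}{r-1}+4}$, so that $n-r=\frac{(q^{2}-4)(r-1)}{r}$. Substituting this into the right-hand side of (\ref{bon}) collapses it to $\frac{(r-1)n}{2r}\bigl(\frac{q^{2}-4}{q}+2+\frac{4}{q}\bigr)=\frac{(r-1)n(q+2)}{2r}$; for $r=2$ (where $q=\sqrt{2n}$) the same computation gives $n^{3/2}/\sqrt{8}+n/2$, so the case $r=2$ of the theorem --- bound and extremal graphs alike --- is exactly Koolen--Moulton's bound (\ref{KMb}). I would therefore assume $r\ge3$ and prove $\|G\|_{\ast}\le\frac{(r-1)n(q+2)}{2r}$.

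\emph{Sparse graphs.} Set $A:=2m$ and $A_{0}:=\frac{n^{2}(r-1)^{2}}{n+(r-1)^{2}-1}$. If $A<A_{0}$, Cauchy--Schwarz gives $\|G\|_{\ast}^{2}\le n\sum_{i}\lambda_{i}^{2}=nA<nA_{0}\le n^{2}(r-1)^{2}$ (the last step because $(r-1)^{2}\ge1$), so $\|G\|_{\ast}<n(r-1)$. Moreover $q\ge2(r-1)$: indeed $q^{2}-4(r-1)^{2}\ge0$ simplifies to $n\ge4r^{2}-11r+8$, which is implied by $n\ge4(r-1)^{2}=4r^{2}-8r+4$. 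Hence $n(r-1)\le\frac{(r-1)n(q+2)}{2r}$, and this case is done, strictly. This is the only point where the hypothesis $n\ge4(r-1)^{2}$ is used.

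\emph{Dense graphs; (\ref{bom}) under a weaker hypothesis.} Suppose $A\ge A_{0}$. The proof of Theorem~\ref{thum} establishes $\|G\|_{\ast}\le f(\lambda_{1},y)$ with $y=\sum_{i=n-r+2}^{n}|\lambda_{i}|$, and the facts $y\ge\lambda_{1}\ge A/n$ and $\lambda_{1}^{2}+\frac{1}{r-1}y^{2}\le A$, without using $2m\ge r^{2}n$; that hypothesis is used only to drive the two monotonicity steps, and each survives a weaker assumption. From $\partial f/\partial y\ge0$ one gets $A\ge\lambda_{1}^{2}+\frac{n-1}{(r-1)^{2}}y^{2}>\frac{A^{2}}{n^{2}}\bigl(1+\frac{n-1}{(r-1)^{2}}\bigr)$ (using $y>\lambda_{1}\ge A/n$), forcing $A<A_{0}$; so for $A\ge A_{0}$ the function $f$ decreases in $y$ beyond $\lambda_{1}$, whence $\|G\|_{\ast}\le g(\lambda_{1})$ with $g(x):=2x+\sqrt{(n-r)(A-\frac{r}{r-1}x^{2})}$. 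Likewise $dg/dx\le0$ is equivalent to $x^{2}\ge\frac{4A(r-1)}{rq^{2}}$, so $g$ is non-increasing for $x\ge A/n$ once $A\ge A_{1}:=\frac{4n^{2}(r-1)}{rq^{2}}$; and $A_{0}\ge A_{1}$, since clearing denominators reduces this to $(n-r)(r^{2}-4)\ge0$. Hence $\|G\|_{\ast}\le g(\lambda_{1})\le g(A/n)$, which is (\ref{bom}).

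\emph{Maximizing over $m$; equality.} Written in terms of the average degree $x=2m/n$, the right-hand side of (\ref{bom}) is $\varphi(x):=2x+\sqrt{(n-r)(nx-\frac{r}{r-1}x^{2})}$, a concave function on $[0,(1-1/r)n]$, an interval containing $x$ for every $r$-partite graph of order $n$. Clearing the radical in $\varphi'(x)=0$ gives a quadratic in $\frac{r}{r-1}x$ with roots $\frac{n}{2}(1\pm\frac{2}{q})$; the sign of $\varphi'$ selects the larger, so $\varphi$ is maximized at $x^{\ast}:=\frac{(r-1)n}{2r}(1+\frac{2}{q})$, and substituting back (using $q^{2}-4=\frac{r(n-r)}{r-1}$) the radical becomes $\frac{(n-r)n}{2q}$ and $\varphi(x^{\ast})=\frac{(r-1)n(q+2)}{2r}$. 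Together with the sparse case this proves (\ref{bon}). For equality: the sparse case is strict, so equality forces $A\ge A_{0}$, hence both $2m=nx^{\ast}$ (else $\varphi(x)<\varphi(x^{\ast})$) and equality in (\ref{bom}); since $nx^{\ast}\ge\frac{(r-1)n^{2}}{2r}\ge A_{0}$ --- the latter being equivalent to $n\ge r^{2}$, which holds --- clauses (i)--(iii) of Theorem~\ref{thum} apply with $2m=nx^{\ast}$, and inserting $x^{\ast}$ yields clauses (i),(ii) verbatim and, for (iii), the common value $\lambda_{i}^{2}=\frac{1}{n-r}\bigl(nx^{\ast}-\frac{r}{r-1}(x^{\ast})^{2}\bigr)=\frac{n^{2}}{4q^{2}}$; conversely these clauses force equality throughout. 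The delicate point is the third step --- keeping (\ref{bom}) alive under $A\ge A_{0}$ instead of $A\ge r^{2}n$, which is truly necessary, since for small $r$ the optimal density $x^{\ast}$ can lie below $r^{2}$, so Theorem~\ref{thum} alone misses the extremal $m$ --- along with the algebra that collapses $\varphi(x^{\ast})$ to $\frac{(r-1)n(q+2)}{2r}$.
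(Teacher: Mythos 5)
Your proposal is correct and follows the same outline as the paper's proof: split on the edge count, use a crude Cauchy--Schwarz estimate for sparse graphs, and feed the bound (\ref{bom}) of Theorem \ref{thum} into a one-variable maximization over the average degree for dense graphs. The differences are in the execution, and they are worth recording. The paper splits at $2m=r^{2}n$ (the hypothesis of Theorem \ref{thum}), bounds sparse graphs by $\sqrt{2mn}<rn$, and then verifies $rn<$ RHS of (\ref{bon}) through the chain of estimates around (\ref{in1}), which is where the restriction to $r\ge3$ enters; you instead split at $A_{0}=\frac{n^{2}(r-1)^{2}}{n+(r-1)^{2}-1}$, which makes the sparse comparison reduce to the clean inequality $q\ge2(r-1)$ (this is exactly where $n\ge4(r-1)^{2}$ is used), and you re-run the two monotonicity steps of Theorem \ref{thum}'s proof to show that (\ref{bom}) survives under the weaker hypothesis $2m\ge A_{0}$. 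That last step is a genuine improvement rather than a cosmetic one: as you observe, the maximizing density $x^{\ast}=\frac{(r-1)n}{2r}\bigl(1+\frac{2}{q}\bigr)$ can fall below $r^{2}$ (e.g.\ $r=3$, $n=16$ gives $x^{\ast}\approx7.5<9$), so the paper's appeal to ``a restatement of the last part of Theorem \ref{thum}'' for the equality clauses is not literally available in that regime, whereas your threshold $A_{0}\le nx^{\ast}$ (equivalent to $n\ge r^{2}$) guarantees the equality analysis always applies at the maximizer. You also handle $r=2$ explicitly via (\ref{KMb}), which the paper's proof silently omits (Theorem \ref{thum} requires $r>2$). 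All the supporting computations I checked --- the collapse of the RHS of (\ref{bon}) to $\frac{(r-1)n(q+2)}{2r}$, the comparison $A_{0}\ge A_{1}$ reducing to $(n-r)(r^{2}-4)\ge0$, the location of the critical point of $\varphi$, and the values in clauses (i)--(iii) --- are correct.
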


\begin{proof}
If $2m\geq r^{2}n,$ we maximize the function
\[
f\left(  x\right)  :=2x+\sqrt{\left(  n-r\right)  \left(  xn-\frac{r}%
{r-1}x^{2}\right)  },
\]
subject to
\[
\left(  r-1\right)  ^{2}/2\leq x\leq\left(  1-1/r\right)  n,
\]
and find that $f\left(  x\right)  $ attains a maximum for%
\[
x=\left(  1+\frac{2}{\sqrt{\left(  n-r\right)  \frac{r}{r-1}+4}}\right)
\frac{\left(  r-1\right)  n}{2r},
\]
which gives precisely (\ref{bon}).

If $2m<r^{2}n,$ we use the crude estimate by the AM-QM inequality,%
\[
\left\Vert G\right\Vert _{\ast}\leq\sum_{i=1}^{n}\left\vert \lambda
_{i}\right\vert <\sqrt{n\sum_{i=1}^{n}\lambda_{i}^{2}}=\sqrt{2mn}<rn.
\]
Now (\ref{bon}) follows by%
\[
r<\frac{\left(  n-r\right)  }{2\sqrt{\left(  n-r\right)  \frac{r}{r-1}+4}%
}+\frac{r-1}{r},
\]
which is equivalent to
\begin{equation}
\sqrt{\frac{r-1}{r-1/4}+\frac{\left(  r-1\right)  ^{2}}{r^{2}\left(
r-1/4\right)  ^{2}}}<\frac{r\left(  r-1\right)  }{r^{2}-r+1} \label{in1}%
\end{equation}
For the left side of (\ref{in1}) we get
\begin{align*}
\sqrt{\frac{r-1}{r-1/4}+\frac{\left(  r-1\right)  ^{2}}{r^{2}\left(
r-1/4\right)  ^{2}}}  &  <\sqrt{1-\frac{3}{4r-1}+\frac{1}{r^{2}}}<1-\frac
{3}{2\left(  4r-1\right)  }+\frac{1}{2r^{2}}\\
&  <1-\frac{3}{8r}+\frac{1}{2r^{2}}.
\end{align*}
For the right side of (\ref{in1}) we see that
\[
\frac{r\left(  r-1\right)  }{r^{2}-r+1}=1-\frac{1}{r^{2}-r+1}>1-\frac{1}%
{r^{2}-r}.
\]
Now, (\ref{in1}) follows from
\[
1-\frac{1}{r^{2}-r}>1-\frac{3}{8r}+\frac{1}{2r^{2}},
\]
which is true for $r\geq3.$

Clauses \emph{(i),(ii), }and \emph{(iii) }are\ just a restatement of last part
of Theorem \ref{thum}, so we omit them.
\end{proof}

\begin{remark}
\textbf{ }It is possible that bound (\ref{bon}) is exact for infinitely many
$r$ and $n.$ In general, it can be shown, that (\ref{bon}) is better than
(\ref{eab}) as long as $n>4\left(  r-1\right)  ^{2}$, but the difference
between their right sides never exceeds some constant that is independent of
$n.$ That is to say, (\ref{eab}) is exact within a linear term in $n.$
\end{remark}

\section{Constructions}

Recall that an \emph{Hadamard matrix} of order $n$ is an $n\times n$ matrix
$H$ with entries of modulus $1$ and such that $HH^{\ast}=nI_{n};$ hence, all
singular values of $H$ are equal to $\sqrt{n}$. Also, a \emph{conference
matrix} of order $n$ is an $n\times n$ matrix $C$ with zero diagonal, with
off-diagonal entries of modulus $1$, and such that $CC^{\ast}=\left(
n-1\right)  I_{n};$ hence all singular values of $C$ are equal to $\sqrt
{n-1}.$ For details on Hadamard and conference matrices the reader is referred
to \cite{CrKh07,IoKh07}. We shall write $\otimes$ for the Kronecker (tensor)
multiplication of matrices.\medskip\ 

First, we show that bound (\ref{Mb}) in Theorem \ref{th1} is best possible for
infinitely many $n,$ whenever $r$ is the order of a conference matrix.

\begin{theorem}
\label{th3}Let $r$ be the order of a conference matrix of order $r,$ and let
$k$ be the order of an Hadamard matrix. There exists an $r$-partite matrix $A$
of order $n=rk$ with $\left\Vert A\right\Vert _{\max}=1$ and
\[
\left\Vert A\right\Vert _{\ast}=n^{3/2}\sqrt{1-1/r}.
\]

\end{theorem}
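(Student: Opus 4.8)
The plan is to take $A=C\otimes H$, where $C$ is a conference matrix of order $r$ and $H$ is a Hadamard matrix of order $k$, and then to check that $A$ satisfies the hypotheses of Theorem \ref{th1} together with its equality condition. First I would fix the index structure: write $\left[ n\right] =\left[ rk\right] $ as a disjoint union of $r$ consecutive blocks $N_{1},\ldots ,N_{r}$, each of size $k$, so that the entry of $A$ in row $\left( i,a\right) $ and column $\left( j,b\right) $ equals $c_{i,j}h_{a,b}$. Since $C$ has zero diagonal, $A\left[ N_{i},N_{i}\right] =c_{i,i}H=0$ for every $i\in\left[ r\right] $, so $A$ is $r$-partite with respect to $N_{1},\ldots ,N_{r}$. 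Since every off-diagonal entry of $C$ and every entry of $H$ has modulus $1$, every entry of $A$ outside the diagonal blocks has modulus $1$; hence $\left\Vert A\right\Vert _{\max }=1$.

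Next I would compute $AA^{\ast}$ using the multiplicativity of the Kronecker product under Hermitian transposition and under matrix multiplication:
\[
AA^{\ast}=\left( C\otimes H\right) \left( C^{\ast}\otimes H^{\ast}\right) =\left( CC^{\ast}\right) \otimes\left( HH^{\ast}\right) =\left( r-1\right) I_{r}\otimes kI_{k}=\left( r-1\right) k\,I_{rk}.
\]
Since $n=rk$, this says $AA^{\ast}=\left( 1-1/r\right) n\,I_{n}$, so every singular value of $A$ equals $\sqrt{\left( 1-1/r\right) n}$, and therefore $\left\Vert A\right\Vert _{\ast}=n\sqrt{\left( 1-1/r\right) n}=n^{3/2}\sqrt{1-1/r}$, which is the claimed value. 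Equivalently, once $r$-partiteness and $\left\Vert A\right\Vert _{\max}\leq 1$ have been established, the conclusion is immediate from the equality clause of Theorem \ref{th1}.

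I do not expect a genuine obstacle: the one idea is to recognize that the conference matrix supplies the vanishing diagonal blocks that encode the $r$-partite structure, while the Hadamard factor inflates the order from $r$ to $rk$ without disturbing the property that all singular values coincide — exactly the situation in which bound (\ref{Mb}) is attained. The only step demanding a little care is the Kronecker-index bookkeeping and the identity $\left( 1-1/r\right) rk=\left( r-1\right) k$.
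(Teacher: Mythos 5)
Your proposal is correct and follows essentially the same route as the paper: both take $A=C\otimes H$ with the blocks $N_1,\dots,N_r$ of size $k$, the only cosmetic difference being that you compute $AA^{\ast}=(r-1)k\,I_{rk}$ directly while the paper invokes the multiplicativity $\left\Vert C\otimes H\right\Vert_{\ast}=\left\Vert C\right\Vert_{\ast}\left\Vert H\right\Vert_{\ast}$; these amount to the same observation about the singular values of a Kronecker product.
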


\begin{proof}
Let $C\ $be a conference matrix of order $r$ and $H$ be an Hadamard matrix of
order $k.$ Let $A:=C\otimes H,$ and partition $\left[  rk\right]  $ into $r$
consecutive segments $N_{1},\ldots,N_{r}$ of length $k;$ we see that
$\left\Vert A\right\Vert _{\max}=1$ and that $A\left[  N_{i},N_{i}\right]  =0$
for any $i\in\left[  r\right]  .$ Finally, we find that
\[
\left\Vert A\right\Vert _{\ast}=\left\Vert C\otimes H\right\Vert _{\ast
}=\left\Vert C\right\Vert _{\ast}\left\Vert H\right\Vert _{\ast}=r\sqrt
{r-1}k^{3/2}=n^{3/2}\sqrt{1-1/r},
\]
completing the proof of Theorem \ref{th3}.
\end{proof}

Next, a modification of the above construction provides some matching lower
bounds for Theorems \ref{th2} and \ref{thun}, and Corollary \ref{cor1}.

\begin{theorem}
\label{th4}Let $r$ be the order of a real symmetric conference matrix. If $k$
is the order of a real symmetric Hadamard matrix, then there is an $r$-partite
graph $G$ of order $n=rk$ with
\[
\left\Vert G\right\Vert _{\ast}\geq\frac{n^{3/2}}{2}\sqrt{1-1/r}-\left(
1-1/r\right)  n.
\]

\end{theorem}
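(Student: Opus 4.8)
The plan is to mimic the construction of Theorem \ref{th3} but to turn the complex (in fact, $\pm1$-valued) matrix into a genuine graph adjacency matrix, paying the price of a linear-in-$n$ error term exactly as the passage from Theorem \ref{th1} to Theorem \ref{th2} does. Concretely, let $C$ be a real symmetric conference matrix of order $r$ and $H$ a real symmetric Hadamard matrix of order $k$. Form $M:=C\otimes H$; this is a symmetric $\{-1,0,1\}$-matrix of order $n=rk$, it is $r$-partite with respect to the partition of $[rk]$ into $r$ consecutive blocks of size $k$ (since $C$ has zero diagonal, $M[N_i,N_i]=0$), and by Theorem \ref{th3} (or directly, since all singular values of $C$ are $\sqrt{r-1}$ and of $H$ are $\sqrt k$) we have $\left\Vert M\right\Vert_{\ast}=n^{3/2}\sqrt{1-1/r}$.

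Next I would define the graph $G$ by "shifting" $M$ into a $0/1$ matrix: let $K$ be the adjacency matrix of the complete $r$-partite graph with classes $N_1,\dots,N_r$ (equivalently $K=J_n$ with the diagonal blocks $J[N_i,N_i]$ zeroed out, which is also $K_r\otimes J_k$), and set $A:=\tfrac12(M+K)$. Off the diagonal blocks every entry of $M$ is $\pm1$ and the corresponding entry of $K$ is $1$, so $A$ has entries in $\{0,1\}$ there; inside the diagonal blocks both $M$ and $K$ vanish, so $A$ vanishes there. Hence $A$ is the adjacency matrix of an $r$-partite graph $G$ of order $n$. Now $M=2A-K$, so by the triangle inequality
\[
n^{3/2}\sqrt{1-1/r}=\left\Vert M\right\Vert_{\ast}=\left\Vert 2A-K\right\Vert_{\ast}\le 2\left\Vert A\right\Vert_{\ast}+\left\Vert K\right\Vert_{\ast}.
\]
As in the proof of Theorem \ref{th2}, $K$ has only one positive eigenvalue, so $\left\Vert K\right\Vert_{\ast}=2\lambda_1(K)$, and Cvetkovi\'c's bound (or a direct computation, since $K=K_r\otimes J_k$ has $\lambda_1=(r-1)k=(1-1/r)n$) gives $\left\Vert K\right\Vert_{\ast}\le 2(1-1/r)n$. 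Rearranging yields
\[
\left\Vert G\right\Vert_{\ast}=\left\Vert A\right\Vert_{\ast}\ge\frac{n^{3/2}}{2}\sqrt{1-1/r}-\left(1-1/r\right)n,
\]
which is the claimed bound.

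The only genuinely delicate point is the existence of the ingredients and the requirement that they be symmetric: we need $C$ to be a \emph{symmetric} conference matrix of order $r$ (this is exactly the hypothesis of the theorem), and we need a symmetric Hadamard matrix of order $k$ for infinitely many $k$ — for instance the Sylvester matrices $H_2^{\otimes s}$ of order $k=2^s$ are symmetric, giving infinitely many admissible $n=r\cdot 2^s$. One should also double-check the self-duality of the norm computation: since $C$ is symmetric its singular values coincide with the absolute values of its eigenvalues, all equal to $\sqrt{r-1}$, and likewise for $H$; multiplicativity of the trace norm under Kronecker product then gives $\left\Vert M\right\Vert_{\ast}=r\sqrt{r-1}\cdot k^{3/2}=n^{3/2}\sqrt{1-1/r}$, so this step is immediate. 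I do not expect any real obstacle beyond bookkeeping; the argument is essentially the "inverse" of the reduction in Theorem \ref{th2} applied to the extremal matrix furnished by Theorem \ref{th3}.
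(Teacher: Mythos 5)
Your proposal is correct and is essentially the paper's own proof: both form $C\otimes H$, shift it by $\tfrac12(K_r\otimes J_k)$ to get a $(0,1)$ adjacency matrix, and apply the triangle inequality together with $\left\Vert K_r\otimes J_k\right\Vert_{\ast}=2(1-1/r)n$. The only difference is cosmetic (you write the triangle inequality as $\left\Vert 2A-K\right\Vert_{\ast}\le 2\left\Vert A\right\Vert_{\ast}+\left\Vert K\right\Vert_{\ast}$ rather than $\left\Vert B+K\right\Vert_{\ast}\ge\left\Vert B\right\Vert_{\ast}-\left\Vert K\right\Vert_{\ast}$, which is the same estimate).
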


\begin{proof}
Let $C$ be a real symmetric conference matrix of order $r$, and let $H$ be a
real symmetric Hadamard matrix of order $k$. Let $B:=C\otimes H,$ and
partition $\left[  rk\right]  $ into $r$ consecutive segments $N_{1}%
,\ldots,N_{r}$ of length $k.$ We see that $B\left[  N_{i},N_{i}\right]  =0$
for any $i\in\left[  r\right]  ,$ and also $B\left[  N_{i},N_{j}\right]  $ is
a $\left(  -1,1\right)  $-matrix whenever $i,j\in\left[  r\right]  $ and
$i\neq j.$ Finally, let
\[
A:=\frac{1}{2}\left(  B+K_{r}\otimes J_{k}\right)  ,
\]
and note that $A$ is a symmetric $\left(  0,1\right)  $-matrix, and $A\left[
N_{i},N_{i}\right]  =0$ for any $i\in\left[  r\right]  $. Hence $A$ is the
adjacency matrix of an $r$-partite graph $G\ $of order $n.$ Note that the
singular values of $B$ are equal to $\sqrt{k\left(  r-1\right)  }%
=\sqrt{\left(  1-1/r\right)  n}.$ Thus, using the triangle inequality, we find
that
\[
\left\Vert \left(  B+K_{n}\otimes J_{k}\right)  \right\Vert _{\ast}%
\geq\left\Vert B\right\Vert _{\ast}-\left\Vert K_{r}\otimes J_{k}\right\Vert
_{\ast}\geq n^{3/2}\sqrt{1-1/r}-2\left(  r-1\right)  k,
\]
and so,
\[
\left\Vert G\right\Vert _{\ast}\geq\frac{n^{3/2}}{2}\sqrt{1-1/r}-\left(
1-1/r\right)  n,
\]
completing the proof of Theorem \ref{th4}.
\end{proof}

\begin{remark}
Complex Hadamard matrices of order $n$ exists for any $n.$ This is not true
for real Hadamard matrices, although there are various constructions of such
matrices, e.g., Paley's constructions:

If $q$ is an odd prime power, then there is a real conference matrix of order
$q+1,$ which is symmetric if $q=1$ $\left(  \operatorname{mod}4\right)  ;$
there is a real Hadamard matrix of order $q+1$ if $q=3$ $\left(
\operatorname{mod}4\right)  ;$ there is a real symmetric Hadamard matrix of
order $2\left(  q+1\right)  $ if $q=1$ $\left(  \operatorname{mod}4\right)  .$
\end{remark}

\section{Asymptotics}

Write $\mathcal{C}_{k}$ for the class of all complex $k$-partite matrices $A$
with $\left\Vert A\right\Vert _{\max}\leq1$, and let $\mathcal{H}_{k}\subset$
$\mathcal{C}_{k}$ be the subclass of the Hermitian matrices in $\mathcal{C}%
_{k}$. Likewise, write $\mathcal{R}_{k}$ for the class of the real $k$-partite
matrices $A$ with $\left\Vert A\right\Vert _{\max}\leq1$, and let
$\mathcal{S}_{k}\subset$ $\mathcal{R}_{k}$ be the subclass of the symmetric
elements of $\mathcal{R}_{k}$.

Further, write $n\left(  A\right)  $ for the order of a square matrix $A$, and
for any class of square matrices $\mathcal{X}$, let $\mathcal{X}\left(
n\right)  $ stand for the subclass of the elements of $\mathcal{X}$ with
$n\left(  A\right)  =n$.

With this notation let us define the functions $c_{k}\left(  n\right)  $,
$h_{k}\left(  n\right)  $, $r_{k}\left(  n\right)  $, and $s_{k}\left(
n\right)  $ as
\begin{align*}
c_{k}\left(  n\right)   &  :=\max\left\{  \left\Vert A\right\Vert _{\ast}%
:A\in\mathcal{C}_{k}\left(  n\right)  \right\}  \text{, \ \ \ \ }h_{k}\left(
n\right)  :=\max\left\{  \left\Vert A\right\Vert _{\ast}:A\in\mathcal{H}%
_{k}\left(  n\right)  \right\}  \text{,}\\
r_{k}\left(  n\right)   &  :=\max\left\{  \left\Vert A\right\Vert _{\ast}%
:A\in\mathcal{R}_{k}\left(  n\right)  \right\}  \text{, \ \ \ }s_{k}\left(
n\right)  :=\max\left\{  \left\Vert A\right\Vert _{\ast}:A\in\mathcal{S}%
_{k}\left(  n\right)  \right\}  \text{.}%
\end{align*}
Theorem \ref{th3} shows that if there is a complex conference matrix of order
$k$, then $c_{k}\left(  n\right)  =n^{3/2}\sqrt{1-1/r}$; similar statements
hold also for $h_{k}\left(  n\right)  $, $r_{k}\left(  n\right)  $, and
$s_{k}\left(  n\right)  $. However, conference matrices are rare and it is
difficult to determine $c_{k}\left(  n\right)  $, $h_{k}\left(  n\right)  $,
$r_{k}\left(  n\right)  $, and $s_{k}\left(  n\right)  $ for any $k$. Thus, in
what follows, we shall prove the possibility for certain asymptotics in $n$
for each of these functions.

For a start, Theorem \ref{th1} implies that if $A\in\mathcal{C}_{k}$, then
$\left\Vert A\right\Vert _{\ast}\leq\left(  n\left(  A\right)  \right)
^{3/2}$. Therefore, for any $k\geq2$, it is possible to define the constants
$c_{k}$, $h_{k}$, $r_{k}$, and $s_{k}$ as
\begin{align*}
c_{k}  &  :=\sup\left\{  \frac{\left\Vert A\right\Vert _{\ast}}{\left(
n\left(  A\right)  \right)  ^{3/2}}:A\in\mathcal{C}_{r}\right\}  \text{,
\ \ \ \ }h_{k}:=\sup\left\{  \frac{\left\Vert A\right\Vert _{\ast}}{\left(
n\left(  A\right)  \right)  ^{3/2}}:A\in\mathcal{H}_{r}\right\}  \text{,}\\
r_{k}  &  :=\sup\left\{  \frac{\left\Vert A\right\Vert _{\ast}}{\left(
n\left(  A\right)  \right)  ^{3/2}}:A\in\mathcal{R}_{r}\right\}  \text{,
\ \ \ }s_{k}:=\sup\left\{  \frac{\left\Vert A\right\Vert _{\ast}}{\left(
n\left(  A\right)  \right)  ^{3/2}}:A\in\mathcal{S}_{r}\right\}  \text{.}%
\end{align*}
Note again that Theorem \ref{th3} yields $c_{k}=\sqrt{1-1/r}$ if there is a
complex conference matrix of order $k$, and similar statements can be proved
also for $h_{k}$, $r_{k}$, and $s_{k}$. However, the main use of $c_{k}$,
$h_{k}$, $r_{k}$, and $s_{k}$ is to provide some asymptotics for $c_{k}\left(
n\right)  $, $h_{k}\left(  n\right)  $, $r_{k}\left(  n\right)  $, and
$s_{k}\left(  n\right)  $: indeed the above definitions imply that
\[
c_{k}\left(  n\right)  \leq c_{k}n^{3/2}\text{, \ \ }h_{k}\left(  n\right)
\leq h_{k}n^{3/2}\text{, \ \ }r_{k}\left(  n\right)  \leq r_{k}n^{3/2}\text{,
\ and\ \ }s_{k}\left(  n\right)  \leq s_{k}n^{3/2}\text{,}%
\]
and, as it turns out, these inequalities are tight.

\begin{theorem}
For any $k\geq2$, the functions $c_{k}\left(  n\right)  $, $h_{k}\left(
n\right)  $, $r_{k}\left(  n\right)  $, and $s_{k}\left(  n\right)  $ satisfy:%
\[
\lim_{n\rightarrow\infty}\frac{c_{k}\left(  n\right)  }{n^{3/2}}=c_{k}\text{,
\ \ }\lim_{n\rightarrow\infty}\frac{h_{k}\left(  n\right)  }{n^{3/2}}%
=h_{k}\text{, \ }\lim_{n\rightarrow\infty}\frac{r_{k}\left(  n\right)
}{n^{3/2}}=r_{k}\text{, \ and \ \ }\lim_{n\rightarrow\infty}\frac{s_{k}\left(
n\right)  }{n^{3/2}}=s_{k}\text{.}%
\]

\end{theorem}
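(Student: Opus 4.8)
The plan is to prove, for each of the four families, the matching lower bound $\liminf_{n\to\infty} c_k(n)/n^{3/2}\ge c_k$ (and likewise for $h_k,r_k,s_k$), since the upper bounds $c_k(n)\le c_k n^{3/2}$ etc. are already recorded just before the statement. The key mechanism is a ``padding'' or ``tensoring'' construction that turns a near-extremal matrix of some fixed order into near-extremal matrices of all sufficiently large orders, combined with a continuity/supremum argument. I expect the main obstacle to be controlling the error terms introduced by padding so that they are genuinely $o(n^{3/2})$, uniformly, and making sure each construction stays inside the correct class ($\mathcal C_k$, $\mathcal H_k$, $\mathcal R_k$, or $\mathcal S_k$) and respects $k$-partiteness.

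Concretely, fix $k$ and let $\varepsilon>0$. By definition of the supremum defining $c_k$, there is a matrix $A_0\in\mathcal C_k$, say of order $m$, with $\|A_0\|_\ast \ge (c_k-\varepsilon)m^{3/2}$; it is $k$-partite with some partition $N_1\cup\cdots\cup N_k$ of $[m]$. To produce a large-order matrix I would tensor on an Hadamard-type matrix: if $\ell$ is the order of a complex Hadamard matrix $H_\ell$ (which, as the final remark notes, exists for every $\ell$), set $A:=A_0\otimes H_\ell$, of order $m\ell$, with partition classes $N_i\times[\ell]$, so $A$ is still $k$-partite and $\|A\|_{\max}=\|A_0\|_{\max}\|H_\ell\|_{\max}\le 1$. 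Since singular values multiply under Kronecker product, $\|A\|_\ast=\|A_0\|_\ast\,\|H_\ell\|_\ast = \|A_0\|_\ast\,\ell^{3/2}$, hence $\|A\|_\ast/(m\ell)^{3/2} = \|A_0\|_\ast/m^{3/2}\ge c_k-\varepsilon$. This already shows $c_k(m\ell)\ge (c_k-\varepsilon)(m\ell)^{3/2}$ for every $\ell$, i.e. along the arithmetic-progression-free sequence of multiples of $m$. To upgrade this to a genuine limit over all $n$, I would handle the residue: given large $n$, write $n=m\ell+t$ with $0\le t<m$, take the order-$m\ell$ construction above, and embed it as the top-left block of an $n\times n$ matrix, padding the remaining $t$ rows and columns with zeros (distributing the $t$ extra indices among the classes $N_i$ arbitrarily, preserving $k$-partiteness). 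Zero padding does not change the nonzero singular values, so the padded matrix still has trace norm $\|A_0\|_\ast\ell^{3/2}$, and
\[
\frac{c_k(n)}{n^{3/2}} \ge \frac{\|A_0\|_\ast\,\ell^{3/2}}{n^{3/2}} \ge (c_k-\varepsilon)\Bigl(\frac{m\ell}{n}\Bigr)^{3/2} \ge (c_k-\varepsilon)\Bigl(1-\frac{m}{n}\Bigr)^{3/2}.
\]
Letting $n\to\infty$ gives $\liminf_n c_k(n)/n^{3/2}\ge c_k-\varepsilon$, and since $\varepsilon$ was arbitrary, $\liminf_n c_k(n)/n^{3/2}\ge c_k$; together with the upper bound the limit equals $c_k$.

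For the other three functions the argument is identical modulo the choice of the tensor factor: for $h_k$ use a Hermitian Hadamard matrix, for $r_k$ a real Hadamard matrix, and for $s_k$ a real symmetric Hadamard matrix — but here there is a subtlety, since real (symmetric) Hadamard matrices do not exist in every order, so one cannot simply pick $\ell$ freely. The fix is to observe that real symmetric Hadamard matrices exist of order $\ell$ for an infinite set of $\ell$ (e.g. $\ell=2(q+1)$ for odd prime powers $q\equiv1\ (\mathrm{mod}\,4)$, as in the remark, or more simply $\ell=4^j$ via iterated tensor squares of a fixed small example), and this set is multiplicatively rich enough that for every large $n$ one can choose such an $\ell$ with $m\ell\le n$ and $m\ell/n\to1$; alternatively, tensor $A_0$ with itself or with a fixed extremal symmetric object to fatten it, then zero-pad. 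The Kronecker factor must be chosen Hermitian/real/real-symmetric precisely so that $A_0\otimes H_\ell$ (and then the zero-padding) stays in the required class — a Kronecker product of two Hermitian matrices is Hermitian, of two real matrices is real, of two real symmetric matrices is real symmetric. The only real work, and the step I would be most careful about, is verifying that an admissible $\ell$ can always be found with $m\ell/n\to 1$; once that density statement is in hand, the $\liminf$ computation above goes through verbatim for $h_k$, $r_k$, and $s_k$, completing the proof.
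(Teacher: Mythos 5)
Your overall strategy is the same as the paper's: take a near-extremal matrix $A_0$ of fixed order $m$ realizing the supremum to within $\varepsilon$, tensor it with a Hadamard-type matrix to reach orders $m\ell$ without changing the ratio $\|\cdot\|_\ast/n^{3/2}$, and then zero-pad to reach all intermediate orders, using that padding preserves the class, the $k$-partition, and the singular values. Your treatment of the complex class $\mathcal{C}_k$ is complete and in fact cleaner than the paper's, since complex Hadamard (Fourier) matrices exist in every order $\ell$, so you can take $\ell=\lfloor n/m\rfloor$ and the padding loss is at most $m$ indices, with no number-theoretic input at all.

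The gap is exactly the step you flag at the end, and it is not cosmetic. For $\mathcal{H}_k$, $\mathcal{R}_k$, and $\mathcal{S}_k$ you need the set $L$ of admissible tensor-factor orders to satisfy: for every large $n$ there is $\ell\in L$ with $m\ell\le n$ and $m\ell/n\to 1$. Your first fallback, $\ell=4^{j}$, does \emph{not} have this property: consecutive powers of $4$ have ratio $4$, so in the worst case $m\ell/n$ is only about $1/4$ and you lose a multiplicative factor of $4^{3/2}=8$ in the trace norm, which destroys the limit. Your second suggestion, $\ell=2(q+1)$ for primes $q\equiv 1\ (\mathrm{mod}\ 4)$, is the one the paper uses, and the required density statement is precisely where the paper invokes the Baker--Harman--Pintz theorem: for large $q$ there is a prime $p\equiv 1\ (\mathrm{mod}\ 4)$ with $q<p<q+q^{11/20}$, so consecutive admissible orders have ratio $1+O(q^{-9/20})$ and the padding loss is $o(n^{3/2})$. (For a bare limit with no rate, the prime number theorem for arithmetic progressions would already suffice, since the interval $(q,(1+\delta)q]$ contains a prime $\equiv 1\ (\mathrm{mod}\ 4)$ for all large $q$; but some such result must be cited --- it is the one genuinely nontrivial ingredient, and your proposal leaves it as an unproved ``density statement''.) Also note that for $\mathcal{H}_k$ you do not need a separate notion of ``Hermitian Hadamard matrix'': a real symmetric Hadamard factor keeps a Hermitian $A_0\otimes H$ Hermitian, which is how the paper handles that case.
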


\begin{proof}
We shall prove only the statement for $h_{k}\left(  n\right)  $, as the proofs
of the other cases are essentially the same. Let $\varepsilon>0$ and choose a
matrix $A\in\mathcal{H}_{k}$, say with $n\left(  A\right)  =n$, such that
\[
\frac{\left\Vert A\right\Vert _{\ast}}{n^{3/2}}>h_{k}-\frac{\varepsilon}%
{2}\text{.}%
\]
We shall show that for $m$ sufficiently large
\begin{equation}
\frac{h_{k}\left(  m\right)  }{m^{3/2}}>h_{k}-\varepsilon\text{.} \label{inb}%
\end{equation}
To this end, let $q$ be any prime with $q=1$ $\left(  \operatorname{mod}%
4\right)  $, and recall that Paley's construction yields a real symmetric
Hadamard matrix $H$ of order $2\left(  q+1\right)  $. Set $B:=A\otimes H$ and
note that $B$ is a Hermitian $k$-partite matrix with $\left\Vert B\right\Vert
_{\max}\leq1$. Hence $B\in\mathcal{H}_{k}\left(  2\left(  q+1\right)
n\right)  $. Thus, for any integer $m=2\left(  q+1\right)  n$, where $q$ is a
prime with $q=1$ $\left(  \operatorname{mod}4\right)  $, we see that
\begin{equation}
\frac{h_{k}\left(  m\right)  }{m^{3/2}}\geq\frac{\left\Vert B\right\Vert
_{\ast}}{\left(  n\left(  B\right)  \right)  ^{3/2}}=\frac{\left\Vert A\otimes
H\right\Vert _{\ast}}{n^{3/2}\left(  n\left(  H\right)  \right)  ^{3/2}}%
=\frac{\left\Vert A\right\Vert _{\ast}}{n^{3/2}}>h_{k}-\frac{\varepsilon}%
{2}\text{.} \label{inb1}%
\end{equation}

Now, let $m$ be any integer and let $q$ be the largest prime with $q=1$
$\left(  \operatorname{mod}4\right)  $ such that $2\left(  q+1\right)  n<m$.
Set $2\left(  q+1\right)  n:=t$ and let $C\in\mathcal{H}_{k}\left(  t\right)
$ be a matrix with $\left\Vert C\right\Vert _{\ast}=h_{k}\left(  t\right)  $.
Hence (\ref{inb1}) implies that
\[
\frac{\left\Vert C\right\Vert _{\ast}}{n^{3/2}\left(  2\left(  q+1\right)
\right)  ^{3/2}}>h_{k}-\frac{\varepsilon}{2}\text{.}%
\]
Let $\left[  t\right]  =N_{1}\cup\cdots\cup N_{k}$ be the partition of the
index set of $C$ such that $C\left[  N_{i},N_{i}\right]  =0$ for any
$i\in\left[  k\right]  $. Define an $m\times m$ matrix $B$, by extending $C$
with $m-t$ zero columns and rows and letting $N_{k}^{\prime}:=N_{k}\cup\left(
\left[  m\right]  \backslash\left[  t\right]  \right)  $. Thus, $B\left[
N_{k}^{\prime},N_{k}^{\prime}\right]  =0$, and so $B$ is $k$-partite. Clearly
$\left\Vert B\right\Vert _{\max}\leq1$, and therefore $B\in\mathcal{H}%
_{k}\left(  m\right)  $. Further, we find that
\[
\left\Vert B\right\Vert _{\ast}\geq\left\Vert C\right\Vert _{\ast}\geq\left(
h_{k}-\frac{\varepsilon}{2}\right)  n^{3/2}\left(  2\left(  q+1\right)
\right)  ^{3/2}\text{,}%
\]
and hence,
\[
\frac{\left\Vert B\right\Vert _{\ast}}{m^{3/2}}>\left(  h_{k}-\frac
{\varepsilon}{2}\right)  \frac{n^{3/2}\left(  2\left(  q+1\right)  \right)
^{3/2}}{m^{3/2}}\text{.}%
\]
A result about the distribution of primes \cite{BHP01} implies that if $q$ is
large enough, then there is a prime $p$ with $p=1$ $\left(  \operatorname{mod}%
4\right)  $ such that $q<p<q+q^{11/20}$. Hence
\[
m<2\left(  q+q^{11/20}+1\right)  n\text{,}%
\]
and so, using Bernoulli's inequality,%
\begin{align*}
\frac{\left\Vert B\right\Vert _{\ast}}{m^{3/2}}  &  >\left(  h_{k}%
-\frac{\varepsilon}{2}\right)  \frac{\left(  q+1\right)  ^{3/2}}{\left(
q+q^{11/20}+1\right)  ^{3/2}}>\left(  h_{k}-\frac{\varepsilon}{2}\right)
\left(  1-\frac{q^{11/20}}{q+1}\right) \\
&  >\left(  h_{k}-\frac{\varepsilon}{2}\right)  \left(  1-\frac{3}{2}%
q^{-9/20}\right)  \text{.}%
\end{align*}
Therefore, if $m$ is large enough, we see that (\ref{inb}) holds. Since
$h_{k}\left(  m\right)  \leq h_{k}m^{3/2}$, it follows that%
\[
\lim_{n\rightarrow\infty}\frac{h_{k}\left(  n\right)  }{n^{3/2}}=h_{k}\text{,}%
\]
completing the proof of the theorem.
\end{proof}

Particularly challenging is the following problem:

\begin{problem}
Find $c_{3}$.
\end{problem}

\subsection{Two general bounds on $\left\Vert G\right\Vert _{\ast}$}

Since the set of known conference and Hadamard matrices is quite sparse, the
following two explicit estimates may be useful.

\begin{proposition}
For any $r>2,$ there is an $n_{0}(r),$ such that if $n>n_{0}(r)$, then there
is an $r$-partite graph $G\ $of order $n$ with
\[
\left\Vert G\right\Vert _{\ast}>\frac{n^{3/2}}{2}(1-1/\sqrt{r})-n^{21/20}.
\]

\end{proposition}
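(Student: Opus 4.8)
The plan is to reduce everything to Theorem \ref{th4} at the cost of two minor losses: replacing the (possibly nonexistent) conference matrix of order $r$ by one of a slightly smaller admissible order $r'$, and absorbing the sparseness of admissible real symmetric Hadamard orders by appending isolated vertices. First I would fix, for the given $r>2$, the largest integer $r'\le r$ that is the order of a real symmetric conference matrix; such an $r'$ exists (e.g.\ $r'=2$ always, and by Paley's construction every $q+1$ with $q\equiv1\pmod4$ an odd prime power works), and the key elementary fact is
\[
\sqrt{1-1/r'}>1-1/\sqrt r ,
\]
equivalently $r'>r/(2\sqrt r-1)$, i.e.\ roughly $r'>\sqrt r/2$. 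Since consecutive admissible orders are $O(\log r)$ apart (density of primes $\equiv1\pmod4$), the largest admissible $r'\le r$ sits comfortably above that threshold for all large $r$, and the finitely many small $r$ are checked by hand. Set $\delta=\delta(r):=\sqrt{1-1/r'}-(1-1/\sqrt r)>0$; this depends only on $r$.

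Next, for a large $n$ I would pick $k=2(q+1)$ with $q\equiv1\pmod4$ prime (so that Paley supplies a real symmetric Hadamard matrix of order $k$) subject to $r'k\le n$ and $r'k$ maximal; for $n$ large such a $k$ exists, and the short‑interval result for primes $\equiv1\pmod4$ of \cite{BHP01}, used exactly as in the proof of the asymptotic theorem above, gives $n-r'k=O_r(n^{11/20})$. Applying Theorem \ref{th4} to the pair $(r',k)$ produces an $r'$-partite graph $G_0$ of order $N:=r'k$ with
\[
\|G_0\|_\ast\ \ge\ \tfrac12 N^{3/2}\sqrt{1-1/r'}-(1-1/r')N\ \ge\ \tfrac12 N^{3/2}\sqrt{1-1/r'}-n ,
\]
using $(1-1/r')N\le N\le n$. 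Appending $n-N$ isolated vertices to one vertex class of $G_0$ yields an $r$-partite graph $G$ of order $n$ (it is even $r'$-partite, and $r'\le r$) with $\|G\|_\ast=\|G_0\|_\ast$, since isolated vertices contribute only zero eigenvalues.

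It then remains to estimate. Convexity of $x\mapsto x^{3/2}$ gives $N^{3/2}\ge n^{3/2}-\tfrac32 n^{1/2}(n-N)\ge n^{3/2}-O_r(n^{21/20})$, the exponent being $\tfrac12+\tfrac{11}{20}=\tfrac{21}{20}$. Hence
\[
\|G\|_\ast\ \ge\ \tfrac12 n^{3/2}\sqrt{1-1/r'}-O_r(n^{21/20})-n\ =\ \tfrac12 n^{3/2}\bigl(1-1/\sqrt r\bigr)+\tfrac12\delta\,n^{3/2}-O_r(n^{21/20})-n .
\]
Because $\tfrac12\delta\,n^{3/2}$ has growth exponent $\tfrac32>\tfrac{21}{20}>1$, there is an $n_0(r)$ with $\tfrac12\delta\,n^{3/2}>O_r(n^{21/20})+n+n^{21/20}$ for all $n>n_0(r)$, and then $\|G\|_\ast>\tfrac12 n^{3/2}(1-1/\sqrt r)-n^{21/20}$, as claimed.

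The one genuinely delicate point is the first one: securing an admissible conference‑matrix order $r'\le r$ that is close enough to $r$ for $\sqrt{1-1/r'}$ to beat $1-1/\sqrt r$ by a margin independent of $n$; everything afterwards is bookkeeping, with all implied constants permitted to depend on $r$ (since $n_0$ may). Note also that the exponent $21/20$ in the statement is forced — it is precisely the sum of the $\tfrac12$ from differentiating $x^{3/2}$ and the $\tfrac{11}{20}$ prime‑gap exponent of \cite{BHP01}.
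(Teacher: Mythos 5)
The paper states this proposition without proof, so there is no official argument to measure you against; but the shape of the bound gives away the intended construction: take a conference (Paley) graph on $q+1\approx n$ vertices with $q\equiv 1\ (\operatorname{mod}4)$ prime chosen within $n^{11/20}$ of $n$ via \cite{BHP01}, so that its trace norm is at least $\tfrac12 n^{3/2}-O(n^{21/20})$, then split the vertex set into $r$ classes of size about $n/r$ and delete all edges inside each class; the triangle inequality together with the Koolen--Moulton bound (\ref{KM}) applied to each deleted class bounds the loss by $r\cdot\tfrac12(n/r)^{3/2}+O(n)=\tfrac{n^{3/2}}{2\sqrt r}+O(n)$, which is exactly where the constant $1-1/\sqrt r$ comes from. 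Your route --- Theorem \ref{th4} applied with the largest usable symmetric conference-matrix order $r'\le r$, a Paley--Hadamard order $2(q+1)$ close to $n/r'$, and padding by isolated vertices --- is genuinely different, and in fact proves more: your constant is $\sqrt{1-1/r'}$, which strictly exceeds $1-1/\sqrt r$ by a margin $\delta(r)>0$, so the term $\tfrac12\delta n^{3/2}$ swallows every error and the exponent $21/20$ is not ``forced'' in your argument at all (any $o(n^{3/2})$ control of $n-N$, e.g.\ from the prime number theorem in progressions, would already suffice). The trade-off is that the intended proof needs no information about conference-matrix orders below $r$, while yours hinges on exactly that.

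That hinge is the one step you have not actually justified. The assertion that consecutive admissible orders are $O(\log r)$ apart does not follow from the density of primes $\equiv 1\ (\operatorname{mod}4)$; it is a Cram\'er-type maximal-gap statement far beyond what is known. Fortunately what you need is vastly weaker: a real symmetric conference matrix of order $r'$ with $r/(2\sqrt r-1)<r'\le r$, and since $r/(2\sqrt r-1)\le 0.71\sqrt r$ for $r\ge 3$, it is enough to find a prime power $q\equiv 1\ (\operatorname{mod}4)$ with $q+1\in(0.71\sqrt r,\,r]$. This follows from Paley plus Bertrand's postulate --- for instance take $q=p^2$ for an odd prime $p\in(r^{1/4},2r^{1/4}]$ once $r$ is moderately large, with $r'=2$ (valid for $r\le 11$) and $r'=6,10,\dots$ covering the remaining small cases explicitly. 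With that substitution for your incorrect gap claim, the rest of your argument (the application of Theorem \ref{th4}, the isolated-vertex padding, the bound $N^{3/2}\ge n^{3/2}-\tfrac32 n^{1/2}(n-N)$, and the final assembly) is correct.
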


For large $r$ this bound can be improved using results on prime distribution:

\begin{proposition}
For any sufficiently large $r,$ there is an $n_{0}(r),$ such that if
$n>n_{0}(r),$ then there is a graph $G$ of order $n,$ such that
\[
\left\Vert G\right\Vert _{\ast}>\frac{n^{3/2}}{2}\sqrt{1-1/(r-r^{11/20})}.
\]

\end{proposition}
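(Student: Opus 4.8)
The plan is to produce $G$ from a Paley-type conference matrix whose order $r'$ approximates $r$ from above, to feed this matrix into the construction of Theorem~\ref{th4}, and finally to pad with isolated vertices so as to reach every large $n$, exactly in the spirit of the proof of the preceding asymptotics theorem. Recall that when $q$ is a prime with $q\equiv 1\pmod{4}$, Paley's construction yields a real symmetric conference matrix of order $q+1$ and a real symmetric Hadamard matrix of order $2(q+1)$. By the result on primes in short intervals used in that earlier proof (i.e.\ \cite{BHP01}, applied in the residue class $1\pmod{4}$), for all large $r$ there is a prime $q\equiv 1\pmod{4}$ with $r\le q+1<r+r^{11/20}$; set $r':=q+1$. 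Then $r'\ge r$, so $\sqrt{1-1/r'}\ge\sqrt{1-1/r}$, and a one-line mean value estimate yields the quantitative margin
\[
\sqrt{1-1/r}-\sqrt{1-1/(r-r^{11/20})}\ \ge\ \tfrac12\,r^{-29/20}.
\]

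Next, given a large $n$, I would choose the Hadamard factor so that the total order lands just below $n$: let $k:=2(p+1)$, where $p$ is the largest prime with $p\equiv 1\pmod{4}$ and $r'k\le n$, and put $t:=r'k$. Bounding the gap between consecutive such primes by \cite{BHP01} once more gives $t\le n$ and $n-t=O\bigl(r^{9/20}n^{11/20}\bigr)$, so $t/n\to 1$. Theorem~\ref{th4}, applied to the conference matrix of order $r'$ and the Hadamard matrix of order $k$, produces an $r'$-partite graph $G_{0}$ of order $t$ with $\left\Vert G_{0}\right\Vert_{\ast}\ge\tfrac{t^{3/2}}{2}\sqrt{1-1/r'}-(1-1/r')t$. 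Adding $n-t$ isolated vertices to $G_{0}$ yields a graph $G$ of order $n$ with $\left\Vert G\right\Vert_{\ast}=\left\Vert G_{0}\right\Vert_{\ast}$.

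It then remains to check
\[
\tfrac{t^{3/2}}{2}\sqrt{1-1/r'}-(1-1/r')t\ >\ \tfrac{n^{3/2}}{2}\sqrt{1-1/(r-r^{11/20})}.
\]
Using $n^{3/2}-t^{3/2}\le\tfrac32 n^{1/2}(n-t)=O\bigl(r^{9/20}n^{21/20}\bigr)$, the inequality $\sqrt{1-1/r'}\ge\sqrt{1-1/r}$ in the main term, $\sqrt{1-1/r'}\le 1$ and $t\le n$ in the error terms, and the margin from the first step, this reduces to an inequality of the shape
\[
\tfrac{c_{1}n^{3/2}}{r^{29/20}}\ >\ c_{2}\,r^{9/20}n^{21/20}+n
\]
with absolute constants $c_{1},c_{2}>0$. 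Since the left-hand side carries the larger power of $n$, this holds as soon as $n$ exceeds a threshold $n_{0}(r)$ that is polynomial in $r$ (e.g.\ $n_{0}(r)=O(r^{5})$ works).

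The substance of the argument is entirely in this last step: one has to absorb simultaneously three lower-order losses --- the $(1-1/r')n$ defect inherited from Theorem~\ref{th4}, the $O\bigl(r^{9/20}n^{21/20}\bigr)$ loss from rounding the order down to a usable $t=r'k$, and the fact that the exponent $\sqrt{1-1/(r-r^{11/20})}$ demanded in the statement falls short of $\sqrt{1-1/r'}$ only by the small amount $\Theta(r^{-29/20})$. The exponent $r-r^{11/20}$ is calibrated precisely so that $n^{3/2}$ times this margin still dominates all three losses once $n$ is large compared with a fixed power of $r$; verifying that the powers of $n$ and $r$ line up, and extracting an explicit $n_{0}(r)$, is the only genuine computation. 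No idea beyond Paley's construction, \cite{BHP01}, and Theorem~\ref{th4} is needed: the conceptual point is merely that for large $r$ a conference matrix of order within $r^{11/20}$ of $r$ exists.
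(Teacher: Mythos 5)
The paper states this proposition without proof, so I can only compare your argument with the route the paper evidently intends (Paley conference/Hadamard matrices, the prime-gap result of \cite{BHP01}, Theorem \ref{th4}, and padding by isolated vertices). Your overall architecture --- approximate $r$ by a Paley conference-matrix order $r'=q+1$, feed it into Theorem \ref{th4} with a Hadamard factor chosen so that $t=r'k$ lands within $O(r^{9/20}n^{11/20})$ of $n$, pad, and absorb all lower-order losses into the slack between $\sqrt{1-1/r'}$ and the constant $\sqrt{1-1/(r-r^{11/20})}$ in the statement --- is exactly right, and your bookkeeping of the three error terms is sound.

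There is, however, one genuine flaw: you take $r\le r'=q+1<r+r^{11/20}$, i.e.\ you approximate $r$ from \emph{above}. The graph you then build is $r'$-partite, and an $r'$-partite graph with $r'>r$ need not be $r$-partite. Read in context, the proposition is the large-$r$ refinement of the preceding one, which explicitly concerns \emph{$r$-partite} graphs (and if no partiteness were required the statement would be vacuous, since Koolen--Moulton-type constructions already give $\left\Vert G\right\Vert _{\ast}\sim n^{3/2}/2$); the word ``$r$-partite'' is evidently just omitted. So you must approximate from \emph{below}: apply \cite{BHP01} at $x=r-r^{11/20}$ to get a prime $q\equiv 1\pmod 4$ with $r-r^{11/20}<q+1\le r$ (which works for large $r$ since $(r-r^{11/20})^{11/20}+1<r^{11/20}$), and set $r':=q+1$. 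Then $G$ is $r'$-partite, hence $r$-partite, and $\sqrt{1-1/r'}>\sqrt{1-1/(r-r^{11/20})}$ still holds --- indeed this choice explains why the stated constant has $r-r^{11/20}$ in it: it is precisely what one can guarantee about $\sqrt{1-1/r'}$ knowing only $r'>r-r^{11/20}$. The price is that your margin $\tfrac12 r^{-29/20}$ shrinks to roughly $r'-(r-r^{11/20})$ over $2r^2$, i.e.\ only $\Omega(r^{-2})$ in the worst case; but your closing computation is insensitive to this, yielding a threshold $n_{0}(r)$ of order about $r^{49/9}$ instead of $r^{5}$, still polynomial in $r$. With that single correction your proof is complete.
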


\bigskip

\bigskip

\end{document}